\newtheorem{theorem}{Theorem}
\newtheorem{lemma}[theorem]{Lemma}
\newtheorem{proposition}[theorem]{Proposition}
\newtheorem{definition}[theorem]{Definition}
\newcommand{\spam}{\mathop{\mathrm{span}}}
\newcommand{\supp}[1]{{\text{supp}}({#1})}
\newcommand{\dist}{\mathrm{dist}}
\newcommand{\ints}{\mathbb{Z}}
\newcommand{\Z}{\mathbb{Z}}
\newcommand{\Zd}{\mathbb{Z}^d}
\newcommand{\reals}{\mathbb{R}}
\newcommand{\Rd}{\mathbb{R}^d}
\newcommand{\comps}{\mathbb{C}}
\newcommand{\dif}{\, \mathrm{d}}
\newcommand{\ful}[1]{T_h^{\sharp} #1}
\newcommand{\notful}[1]{T_{h}^{\flat} #1}
\newcommand{\final}[2]{T_{#1} #2}
\newcommand{\gapp}[1]{T_{#1}}
\newcommand{\D}{\mathcal{D}}
\newcommand{\E}{\mathcal{E}}
\newcommand{\alp}{\alpha}
\newcommand{\bet}{\beta}
\newcommand{\tet}{\theta}
\newcommand{\hatf}{\widehat{f}}
\newcommand{\hatphi}{\widehat{\phi}}
\newcommand{\norm}[1]{\|#1\|}
\newcommand{\bks}{\backslash}
\newcommand{\cd}{\mathrm{c}}
\begin{document}
\title{Nonlinear Approximation Using Gaussian Kernels}

\author{Thomas Hangelbroek}
\address{Thomas Hangelbroek, Department of Mathematics, Texas A\&M University, College Station, TX 77843}
\email{hangelbr@math.tamu.edu}
\thanks{Thomas Hangelbroek is supported by an NSF Postdoctoral Fellowship}

\author{Amos Ron}
\address{Amos Ron, Computer Science Department, 
University of Wisconsin, Madison, 
WI 53706}
\email{amos@cs.wisc.edu}
\thanks{Amos Ron is supported by the National Science Foundation under 
grants DMS-0602837 and DMS-0914986, and by the National Institute of General 
Medical Sciences under Grant NIH-1-R01-GM072000-01.}

\subjclass[2000]{42C40, 46B70, 26B35, 42B25 }

\keywords{Machine Learning, Gaussians, Kernels, Radial Basis Functions, Nonlinear Approximation, Besov Space, Triebel-Lizorkin Space}

\begin{abstract}
It is well-known that non-linear approximation has an advantage over linear
schemes in the sense that it provides comparable approximation rates
to those of the linear schemes, but to a larger class of approximands. This was
established for spline approximations and for wavelet approximations, and
more recently by DeVore and Ron \cite{DR} for homogeneous radial basis function (surface 
spline) approximations.
However, no such results are known for the Gaussian function, the preferred 
kernel in   machine learning and several engineering problems.
We introduce and analyze in this paper a new algorithm for approximating
functions using translates of Gaussian functions with varying tension
parameters. At heart it employs the strategy for nonlinear approximation of DeVore\! --\! Ron, 
but it selects kernels 
by a method that is not straightforward. 
 The crux of the
difficulty lies in the necessity to vary the tension parameter in the
Gaussian function spatially according to local information about the
approximand: error analysis of Gaussian approximation schemes  with varying
tension are, by and large, an elusive target for approximators.
We show that our algorithm is suitably optimal in the sense that it provides approximation
rates similar to  other established nonlinear methodologies like spline and 
wavelet approximations.  As expected and desired, the approximation rates 
can be as high as needed and are essentially saturated only by the smoothness
of the approximand.
\end{abstract}
\maketitle
\section{Introduction}
\subsection{Nonlinear Radial Basis Function Approximation}
In this article we consider $N$-term approximation 
by Gaussian networks, an approximation technique 
widely used in statistics and engineering.
This is an example of nonlinear approximation since we select
$d$-variate functions residing in 
$$\mathbb{G}_N:=
\left\{\sum_{j=1}^N A_j\  \exp\left(-\left|\frac{\cdot - c_j}{\sigma_j}\right|^2\right): 
A\in \comps^N, \sigma\in (0,\infty)^N, c\in \reals^{dN}\right\}$$
which (failing to be closed under addition) is not a linear space. 
This stands in contrast to the linear approximation problem, often studied
in radial basis function (RBF) theory, 
where the {\em centers} $(c_j)_j$,   
are predetermined and approximants are chosen from a linear space  
$$\spam_{1\le j\le N} \phi_j(\cdot - c_j)  = \spam_{1\le j\le N} \exp\left(-\left|\frac{\cdot - c_j}{\sigma_j}\right|^2\right)$$
that depends on the set of centers.

Heuristically, the benefit of the nonlinear approach is that  by  placing
centers strategically, one may overcome defects, like discontinuities, cusps
or other local deficiencies in smoothness, of the target function $f$. 
Because such defects may be manifested in a variety of ways, over regions
or on lower dimensional manifolds, and may occur at different scales, 
finding a precise strategy is not at all straightforward. In this article, 
we present a 
method for placing centers in a way that is 
suitable for creating effective nonlinear approximants.

An important distinction between the nonlinear and linear problems is in how 
convergence is measured. In the linear setting, the main approximation parameter measures density of the centers, usually by means of the ``fill distance'' $h = \max_{x\in \Omega} \dist\bigl(x,(c_j)_j\bigr)$; the underlying approximation problem is to measure the rate of convergence as $h$ shrinks.
In high dimensions, the assumption that centers fill
a (high dimensional) region $\Omega$ with a small fill distance is computationally impractical.  
In nonlinear approximation the rate of convergence is measured against the parameter $N$, the cardinality of the set of centers. This approach lends itself to more frugal approximation in high dimensions.

The approximation scheme we introduce selects $s_{f,N}$ from $\mathbb{G}_N$, 
and is shown to have convergence rate
$ \|f -s_{f,N}\|_p= \mathcal{O}(N^{-s/d})$ for target functions $f$ having $L_{\tau}$ smoothness $s$, with $\frac{1}{\tau} = \frac{s}{d} +\frac{1}{p}$. 
Generally speaking, such nonlinear estimates are sharp in the sense that
they are similar to known results for nonlinear wavelet approximation,
and one cannot expect to achieve a similar rate $\dist_p(f, \mathbb{G}_N) =
\mathcal{O}(N^{-s/d})$ by decreasing either $\tau$ or  the underlying
smoothness.
%

To provide a more robust space of approximants, we permit the tension (aka shape or dilation) parameters $\sigma_j$ to respond to the nonuniform distribution of the centers.
The question of how to tune a tension parameter
is of active interest to the Learning Theory community, \cite{StSc,YiZh}, 
as well as the RBF community \cite{LiMi,BR}, 
but in most theoretical works, the tension parameter is taken to be constant for all centers.
Although the spatially varying tension parameter is a natural idea, 
and is used in practice \cite{KaCa,PogVap}, 
it has heretofore not been considered seriously in an approximation theoretic sense. 
Although it may be tempting to use tight dilations when the centers are dense, essentially
setting $\sigma_j$ proportional to a local spacing of centers around $c_j$, 
the manner in which our scheme sets the tension is more complicated, 
but one that is ultimately justified by the error estimates we provide.
In any case, we note that there is some empirical evidence \cite[Section 3]{FZ}
that Gaussian approximation is unstable without adjusting the tension.
%

Nonlinear approximation with RBFs has not been investigated with the same intensity as other basic elements of approximation theory (splines, wavelets, etc.).
Recently DeVore and Ron \cite{DR} (employing a idea on which we have modeled our method) have made a first foray into nonlinear RBF approximation using RBFs that are fundamental solutions of elementary, homogeneous, elliptic PDEs. 
Such RBFs, which include the ``surface splines,'' allow simple but elegant approximation schemes that are not burdened by the requirement that the target function must reside in the native space. In addition, the homogeneity of these RBFs means that the $N$-term approximation spaces 
are, essentially, invariant under rescaling and, thus, there is no need to select dilations $\sigma_j$ -- this is done automatically.
However, many prominent RBFs, including the Gaussians, do not fall into this category. For the 
kernels considered by DeVore and Ron, 
the approximation order is {\em saturated}, meaning that for this method there is an upper bound on the rate of convergence: 
by increasing smoothness beyond a saturation level
$k$ (determined by the order of the elliptic differential operator inverted by the kernel) 
there is no corresponding increase in the rate of decay of the error.
This is not so with Gaussian kernels.
Furthermore, the kernels used by DeVore and Ron are dependent on the operator they invert, and, hence, (subtly) dependent on the spatial dimension. This is a  hindrance which the Gaussians also avoid.

\subsection{The Methodology}
As in \cite{DR}, to construct the $N$-term approximant $s_N$, 
we begin with a wavelet decomposition of the
target function $f= \sum_I f_I \psi_I$.  Based on the size of the wavelet coefficient and 
the smoothness norm of the target function, the fixed budget of $N$ terms is distributed
over the elements in the expansion -- into individual budgets $N_I$ (many of 
which are zero). Each wavelet $\psi_I$ is then approximated by a linear 
combination $s_I$ of Gaussians that uses at most $N_I$ terms. The full $N$ 
term approximant is then $s_{f,N} = \sum f_I s_I$.  The main idea is that we 
have a scheme for nonlinear approximation associated with this family of 
wavelets that can be lifted to the Gaussians by means of approximating the 
individual members of the family.  Matters are simplified when we assume the 
entire family to be generated from a few prototypes via dilation and 
translation: our collection of Gaussians are invariant under these operations!
This reduces the problem of efficiently 
approximating all members of the wavelet family to the problem of 
approximating a few fixed wavelets by linear combinations of Gaussians.

The crucial issue is to approximate a basic function $\psi$ using a linear 
combination of $N$ shifted Gaussians.  We view the number $N$ as the 
portion we are willing to invest in approximating $\psi$ out of our total
budget of centers. It is 
essential to understand how to apportion the budget, and this can only be 
accomplished when we have good $N$-term error estimates.  Thus, we are 
interested in understanding how to approximate globally using only finitely 
many centers.  This is a very hard problem for the Gaussian. We completely
resolve this problem for a function
$\psi$ that is band-limited, and in addition, has rapid decay:
$$\text{for every $k$ there is a constant $C_k$ such that } |\psi(x)| \le C_k (1+|x|)^{-k}.$$
The trick we employ is to create an approximant $\sum_{\alpha\in h\ints^d} 
a(\alpha,h) \phi(\cdot - \alpha)$ that converges rapidly (globally) to 
$\psi$ in the $L_{\infty}$ norm, with coefficients $a(\alpha, h)$ that are 
roughly the same size as $\psi(\alpha)$. Then we modify this approximation 
scheme by throwing away centers from a region where $\psi$ is small. This is 
where the two assumptions on the wavelet -- that $\psi$ is bandlimited and 
that it is rapidly decaying -- come into play. Bandlimiting means that the 
``full'' approximation scheme (using centers $h\ints^d$) has coefficients 
$a(\alpha,h)$ that can be expressed as the convolution of $\psi$ with a 
Schwartz function. Rapid decay allows us to attribute polynomial decay of 
arbitrary orders to the coefficients.

\subsection{Organization}
In Section 2 of this article, we develop the basic linear approximation 
scheme at the heart of our approach.  First considered is the operator 
$T_h^{\sharp}$, which generates the `full' approximant, an infinite series 
of Gaussians having the grid $h\ints^d$ as the set of centers.  
Second we develop the operator $T_{h}^{\flat}$, which generates the `truncated' approximant -- a linear combination of roughly $h^{-2d}$ Gaussians . 
At the end of Section 2 we generalize $T_{h}^{\flat}$ to treat scaled wavelets using a fixed budget of $N$ centers. This is the role of the map $T_{N}$.  Corollary \ref{loc_err3} gives the error for wavelets at all dilation levels.

Section 3 treats nonlinear approximation in $L_p$ for $1\le p <\infty$. Results match  those obtained for surface splines in \cite{DR}. This involves a sophisticated strategy for  distributing centers, which is expressed in Section 3.1. The main result is Theorem \ref{p_main} in Section 3.3.

Section 4 treats nonlinear approximation in $L_\infty$, a case was not
considered in \cite{DR}. For technical reasons, we consider approximation of 
functions from Besov spaces in this section. The main result in that section 
is Theorem \ref{inf_first}.

\subsection{Notation and Background}
We denote the ball with center $c$ and radius $R$ by $B(c,R).$  The symbol 
$I\subset \reals^d$ will represent a cube with corner at  $c(I)\in \reals^d$ 
and sidelength $\ell(I)>0$: it is the set $c(I) +[0,\ell(I)]^d.$  We denote the volume of a set $\Omega$ in $\reals^d$ by $|\Omega|.$

The natural affine change of variables associated with a cube $I$ is denoted 
with the subscript $I$: i.e., for a function $g:\reals^d\to \comps$, 
$$g_I(x) := g\left(\frac{x-c(I)}{\ell(I)}\right).$$

The symbol $C$, often with a subscript, will always represent a constant. The subscript is used to indicate dependence on various parameters. 
The value of $C$ may change, sometimes within the same line.

For  Schwarz functions, the $d$-dimensional Fourier transform is given by the formula
$\widehat{f}(\xi) = \int_{\reals^d} f(x) e^{-i\langle\xi,x\rangle} \dif x$, 
and its inverse is 
$ f(x) = (2\pi)^{-d} \int_{\reals^d} \widehat{f}(\xi) e^{-i\langle x,\xi\rangle} \dif \xi.$
An important property of the Gaussian functions 
\begin{equation}\label{Gaussian}
\phi_{\sigma}: x \mapsto \exp\bigl[-|x/\sigma|^2\bigr],
\end{equation}
is that they satisfy
$\widehat{\phi_{\sigma}}   = (\sigma \sqrt{\pi})^d\phi_{(2/\sigma)}.$

\section{Shift-invariant Gaussian approximation of band-limited functions}
\subsection{Approximation using infinitely many centers}
Let $B \subset\Rd$ be a fixed ball centered at the origin. 
We denote by
\begin{equation}
H_B
\end{equation}
the space of all Schwartz functions whose Fourier transform is supported
in $B$. Let $\phi$ be the $d$-dimensional Gaussian function, dilated
by a fixed  (arbitrary)  dilation $\sigma>0$ (cf. (\ref{Gaussian})). 
Given $h>0$, consider the linear space
$$S_h:=S_h(\phi):=\spam\{\phi(\cdot-\alp):\ \alp\in h\Zd\},$$
closed in the topology, say, of uniform convergence on compact sets.

We consider in this section approximation schemes and approximation errors
for functions in $H_B$ from the space $S_h$. We adopt to this end the
approximation  schemes of \cite{BR}, and show that in our setup these schemes
provide superb approximations to the class $H_B$: the error decays
exponentially fast as the spacing parameter $h$ tends to $0$!

Let us fix now $f\in H_B$, and $h>0$. We denote by $f_\phi$
the function whose Fourier transform is $\hatf/\hatphi$. We note that
$f_\phi$ is in $H_B$,
since $f_{\phi} = f*\eta_{\Phi}$ for a Schwartz function $\eta_{\phi}$ (that
depends only on $\phi$ and $B$) and  $H_B$ is an ideal in the Schwartz 
space.  We then approximate $f$ by $h^d\ful f$,
with
\begin{equation}
\ful f:=
\left(\frac{1}{2\pi}\right)^d\sum_{\alpha \in h\ints^d} f_\phi(\alp) \phi(\cdot - \alpha).
\end{equation}

Our main result in this subsection is the following:
\begin{proposition} \label{NSI}
Let $B= B(0,R)$ be the ball of radius $R$ centered at the origin.
The uniform error in approximating $f\in H_B$ by $h^d\ful f$ as above satisfies,
for $h<\pi/R$,
$$\|f-h^d\ful f\|_{\infty} \le C \|\widehat{f}\|_{L_1} h^d  
e^{\mbox{}-\frac{c}{h^2}}.$$
The constants $C$ and $c$ depend on $R$ and the dilation parameter $\sigma$ 
used in the definition of $\phi$, but are independent of $f$ and $h$.
\end{proposition}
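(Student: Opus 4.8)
The plan is to transfer the whole problem to the Fourier side and read off the error as a pure aliasing sum. By the inversion formula, $\norm{g}_{\infty}\le(2\pi)^{-d}\norm{\widehat g}_{L_1}$ for any $g$, so it is enough to bound the $L_1$ norm of $\hatf-\widehat{h^d\ful f}$. To compute the second transform I would use that $\widehat{\phi(\cdot-\alp)}(\xi)=e^{-i\langle\xi,\alp\rangle}\hatphi(\xi)$, so that $\widehat{h^d\ful f}$ is $\hatphi(\xi)$ multiplied by the trigonometric series $\sum_{\alp\in h\Zd}f_\phi(\alp)e^{-i\langle\xi,\alp\rangle}$; applying the Poisson summation formula to this series (legitimate because $f_\phi\in H_B$ is Schwartz) turns it into a periodization of $\widehat{f_\phi}$ over the dual lattice $\frac{2\pi}{h}\Zd$, the factor $h^d$ exactly cancelling the reciprocal lattice volume.

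The structural heart of the argument is that the zero-frequency copy reproduces $f$. By definition $\widehat{f_\phi}=\hatf/\hatphi$, hence $\hatphi\,\widehat{f_\phi}=\hatf$; moreover, since $\hatphi$ is a nowhere-vanishing Gaussian, $\widehat{f_\phi}$ has exactly the same support as $\hatf$, namely $B=B(0,R)$. Thus the $k=0$ term of the periodization contributes precisely $\hatf$ (this is what the normalising constant in the definition of $\ful$ is calibrated to achieve), it cancels in the error, and one is left with nothing but the aliasing terms:
\begin{equation}
\hatf(\xi)-\widehat{h^d\ful f}(\xi)=-\gamma\,\hatphi(\xi)\sum_{k\ne 0}\widehat{f_\phi}\Bigl(\xi+\tfrac{2\pi}{h}k\Bigr),
\end{equation}
where $\gamma$ is a fixed dimensional constant whose exact value affects only $C$. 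At this point the hypothesis $h<\pi/R$ enters decisively: it forces $\frac{2\pi}{h}>2R$, so the supports $B\bigl(-\frac{2\pi}{h}k,R\bigr)$ of the shifted copies are pairwise disjoint and each stays at distance at least $\frac{2\pi}{h}|k|-R$ from the origin.

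I would then estimate the right-hand side in $L_1$. On the support of the $k$-th copy the Gaussian weight is exponentially small, $|\hatphi(\xi)|=(\sigma\sqrt\pi)^d e^{-|\sigma\xi/2|^2}\le(\sigma\sqrt\pi)^d e^{-(\sigma/2)^2(\frac{2\pi}{h}|k|-R)^2}$; pulling this out of each integral and changing variables reduces the $k$-th integral to a multiple of $\norm{\widehat{f_\phi}}_{L_1}$, which in turn is at most $C_{R,\sigma}\norm{\hatf}_{L_1}$ because $\hatphi$ is bounded below on the fixed ball $B$. Summing the Gaussian tails over $k\ne 0$ gives a convergent lattice sum governed by its nearest-neighbour shell and of size $\mathcal O(e^{-c/h^2})$; assembling the constants, all depending only on $R$ and $\sigma$, produces the claimed estimate $C\norm{\hatf}_{L_1}h^d e^{-c/h^2}$.

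The step I expect to be the main obstacle is the final bookkeeping of this Gaussian lattice sum: one must extract the exponential rate $e^{-c/h^2}$ together with the (harmless) polynomial prefactor $h^d$ uniformly in $h<\pi/R$, for which a comparison of the discrete tail sum with its corresponding Gaussian integral, in the spirit of \cite{BR}, is the natural device. Since the precise $c$ may be taken as small as convenient, the polynomial prefactor is absorbed without difficulty, and the only genuine analytic inputs remain the band-limitedness of $f$ (which localises the aliasing supports) and the rapid Gaussian decay of $\hatphi$ (which makes them negligible).
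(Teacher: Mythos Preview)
Your argument is correct and is essentially the same as the paper's: both apply the Poisson summation formula to exhibit the error as a sum of aliasing terms $\hatphi(\cdot+\beta)\widehat{f_\phi}(\cdot)$ over $\beta\in\tfrac{2\pi}{h}\Zd\setminus\{0\}$, then control these by the Gaussian decay of $\hatphi$ far from the origin (the paper organizes this via a kernel $k_h(\theta,z)$ and bounds $\sum_{\beta\ne0}\|\hatphi(\cdot+\beta)/\hatphi\|_{L_\infty(B)}$, whereas you pass to the $L_1$-norm of the Fourier transform of the error---the computations are dual to one another). Your remark about absorbing the $h^d$ prefactor by slightly shrinking $c$ is exactly how the paper's bound $K_h\le C_1\hatphi(a)$ matches the stated estimate.
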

\begin{proof}
Using the fact that $\widehat{f_\phi}=\hatf/\hatphi$, we write $h^d\ful f$ as
$$\int_{\Rd}\hatf(\tet) k_h(\tet,\cdot)\dif\tet,$$
with 
$$k_h(\tet,z):=(2\pi)^{-d}\frac{h^d}{\hatphi(\tet)}\sum_{\alp\in h\Zd}\phi(z-\alp)
e^{i \langle \theta,  \alp \rangle}.$$
Invoking the Poisson summation formula (which obviously is valid for the
Gaussian function), we obtain that
$$k_h(\tet,z)=(2\pi)^{-d}\frac{e^{i \langle z,  \tet \rangle}}
{\hatphi(\tet)}\sum_{\bet\in 2\pi\Zd/h}\hatphi(\tet+\bet)
e^{i \langle z,  \bet \rangle}.$$
When applying the above kernel to $f$, we are allowed to do the integration
term-by-term, with the ($\bet=0$)-term yielding the original function $f$.
Therefore,
$$f(z)-h^d\ful f(z)=\int_{\Rd}\hatf(\tet)k_h'(\tet,z)\dif \tet,$$
with
$$k_h'(\tet,z):=(2\pi)^{-d}\frac{e^{i \langle z,  \tet \rangle}}
{\hatphi(\tet)}\sum_{\bet\in 2\pi\Zd/h\bks \{0\}}\hatphi(\tet+\bet)
e^{i \langle z,  \bet \rangle}.$$
Note that the kernel is integrated only over $\tet\in B$, since
$\supp\hatf\subset B$ by assumption. Thus, we obtain that
$$\|f-h^d\ful f\|_\infty\le (2\pi)^{-d} \norm{\hatf}_1 K_h,$$
with
$$K_h:=\sum_{\bet\in 2\pi\Zd/h\bks
\{0\}}\|\hatphi(\cdot+\bet)/\hatphi\|_{L_\infty(B)}.$$
%
Let $R$ denote the radius of $B$. If $2R< |\bet|$ then, for $\xi\in B$, 
$|\xi+\beta|^2 - |\xi|^2\ge (|\beta| - 2|\xi|)^2$. Consequently
$$K_h\le C_1 \hatphi(a),$$
for $a< \dist_2\bigl(2B, 2\pi\Zd/h\backslash \{0\}\bigr)=2(\pi/h-R).$
\end{proof} 
\subsection{Approximation using finitely many centers}
In this subsection, we modify the approximant of the previous subsection and 
use only a finite number of centers.  This is a necessary step for us,
since our budget of centers is finite. Our approximand is still
a function $f\in H_B$.

Our setup is as follows. Given $f$ and a mesh-scaling parameter $h$,
we will approximate $f$ by $h^d\notful f$, with
\begin{equation}
\notful f:= (2\pi)^{-d}\sum_{\alp\in h\Zd\cap B_h}f_\phi(\alp)\phi(\cdot-\alp),
\end{equation}
with $f_\phi$ and $\phi$ as in the previous subsection, and $B_h$ is
a ball of radius $1/h$. The crux here is the correspondence between
the mesh size $h$, and the radius $1/h$ of the domain of the shifts we
``preserve'': $\notful f$ is obtained from
$\ful f$ by removing from the sum all shifts outside a ball of radius
$1/h$. 
Note that the number of shifts $N:=N(h)$ that are being used for a given
$h$ satisfies
$$N\sim h^{-2d}$$
with constants of equivalence depending on $d$ only. 
At the end, we need to control the error in terms of the parameter $N$.
 For the time being, we still write the error in terms
of the mesh size $h$. 

Once the approximation operator uses the above truncated sum, one cannot
expect the error to decay exponentially fast as in Proposition \ref{NSI}. 
However, the new error, measured in the uniform norm, still decays rapidly:%
\footnote{We could have made the dependence of $C_{f,k}$ below on $f$ more
explicit. However, this is not needed for our subsequent applications.}

\begin{lemma}\label{uniform_err}
Let $k>0$, and $f\in H_B$. Then there exists $C_{f,k}>0$ that does not
depend on $h$ such that for all small enough $h$
%
%
\begin{equation}\label{E:uniform_err}
\norm{f - h^d\notful f}_\infty \le C_{f,k} h^{k} .
\end{equation}
\end{lemma}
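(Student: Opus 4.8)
The plan is to split the error into the exponentially small error of the \emph{full} scheme $\ful f$ and the error incurred by truncating it down to $\notful f$, and then to control the truncation tail using the rapid decay of $f_\phi$. Concretely, write
$$f - h^d\notful f = \bigl(f - h^d\ful f\bigr) + h^d\bigl(\ful f - \notful f\bigr).$$
For the first summand, Proposition \ref{NSI} already delivers $\norm{f - h^d\ful f}_\infty \le C\norm{\hatf}_{L_1}\, h^d\, e^{-c/h^2}$, and since $e^{-c/h^2}$ decays faster than any power of $h$ as $h\to 0$, this term is in fact $o(h^k)$ for \emph{every} $k$; in particular it is dominated by $C_{f,k}h^k$ once $h$ is small enough. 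So the whole matter reduces to estimating the truncation tail.

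For the second summand, the point is that $h^d(\ful f - \notful f)$ is precisely the portion of the full series carried by the discarded centers:
$$h^d\bigl(\ful f - \notful f\bigr)(x) = (2\pi)^{-d}\, h^d \sum_{\alp\in h\Zd,\ |\alp|>1/h} f_\phi(\alp)\,\phi(x-\alp).$$
Since $\norm{\phi}_\infty = 1$, I would bound $|\phi(x-\alp)|\le 1$ uniformly in $x$, which collapses the problem to a single $x$-independent quantity $\sum_{\alp\in h\Zd,\ |\alp|>1/h}|f_\phi(\alp)|$ and thereby yields an $L_\infty$ bound directly. The essential input is that $f_\phi\in H_B$ is a Schwartz function, so for every $m$ there is a constant $C_m$ (depending on $f$) with $|f_\phi(\alp)|\le C_m(1+|\alp|)^{-m}$.

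The crux is the resulting lattice tail estimate. Comparing the lattice sum to an integral (each cell of $h\Zd$ has volume $h^d$), I expect, for $m>d$,
$$\sum_{\alp\in h\Zd,\ |\alp|>1/h}(1+|\alp|)^{-m} \lesssim h^{-d}\int_{|y|>1/h}(1+|y|)^{-m}\dif y \lesssim h^{m-2d},$$
so that the second summand is $O(h^{m-d})$. Choosing $m=k+d$, which is legitimate because $f_\phi$ is Schwartz, then gives the claimed bound $C_{f,k}h^k$ with $C_{f,k}$ depending on $f$ only through the relevant Schwartz seminorm of $f_\phi$ and independent of $h$. The only step requiring genuine care, and the main obstacle, is making this lattice-to-integral comparison rigorous \emph{uniformly} in small $h$: both the lattice spacing $h$ and the truncation radius $1/h$ move at once, so one must check that the Riemann-sum estimate is valid across the relevant range rather than for a fixed lattice. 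Everything else is a routine combination of the triangle inequality and the already-established exponential estimate.
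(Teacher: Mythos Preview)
Your proposal is correct and follows essentially the same route as the paper: split via the triangle inequality, invoke Proposition~\ref{NSI} for the full-scheme piece, bound the truncation tail by $\sum_{|\alp|>1/h,\,\alp\in h\Zd}|f_\phi(\alp)|$, and use the rapid decay of the Schwartz function $f_\phi$ to make this $O(h^k)$ for every $k$. The only difference is presentational: the paper dismisses the lattice tail in one sentence (``since $f_\phi$ decays rapidly at $\infty$, the above sum is $O(h^k)$ for any fixed $k$''), whereas you carry out the Riemann-sum comparison explicitly and track the exponent $m$; your uniformity concern about the simultaneous scaling of mesh size and truncation radius is legitimate but routine, and the paper does not regard it as an obstacle.
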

\begin{proof}
Thanks to Proposition \ref{NSI}, we only need to show that
$$h^d\norm{\ful f - \notful f}_\infty \le C h^{k}.$$
However, the norm  $\norm{\notful f-\ful f}_{\infty}$ is bounded
above by the sum
$$\sum_{|\alp|>1/h,\alp\in h\Zd}|f_\phi(\alp)|.$$
Since $f_\phi$ decays rapidly at $\infty$, the above sum is $O(h^k)$
for any fixed $k$, and our claim follows.
\end{proof} 


The uniform error bound that we just obtained is not refined enough
for our purposes. We will need better estimates for the error away
from the origin, i.e., outside the ball $B_h$ of radius $1/h$. Indeed, such estimates
are valid, but require a different argument:
\begin{lemma}\label{loc_err}
Let $k>0$, and $f\in H_B$.
Then there is a constant $C_{k}'>0$ (depending on $k$, $d$ and $f$
but independent of $h$), so that the function $\notful f$ from Lemma 
\ref{uniform_err} approximates $f$ with pointwise error:
\begin{equation}\label{E:loc_err}
|(f - h^d\notful f)(x)| \le C_{k}' h^{k} \bigl(1+|x|\bigr)^{-k}.
\end{equation}
\end{lemma}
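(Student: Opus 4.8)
The plan is to split the error according to the decomposition
$$f - h^d\notful f = \underbrace{\bigl(f - h^d\ful f\bigr)}_{=:E_1} + \underbrace{h^d\bigl(\ful f - \notful f\bigr)}_{=:E_2},$$
and to bound $E_1$ and $E_2$ separately by $Ch^{k}(1+|x|)^{-k}$. The truncation tail is explicit,
$$E_2 = (2\pi)^{-d} h^d \sum_{|\alp|>1/h,\ \alp\in h\Zd} f_\phi(\alp)\,\phi(\cdot-\alp),$$
a sum of Gaussians centered only at the \emph{discarded} nodes. The single estimate that drives everything is the discrete convolution bound
$$h^d\sum_{\alp\in h\Zd}(1+|\alp|)^{-M} e^{-|x-\alp|^2/\sigma^2} \le C(1+|x|)^{-M}, \qquad (h\le 1),$$
valid for every $M\ge 0$ with $C$ independent of $h$ and $x$. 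I would prove it by splitting the lattice at $|\alp|=|x|/2$: on the part $|\alp|\ge|x|/2$ the weight is comparable to $(1+|x|)^{-M}$ and the Gaussian lattice sum $h^d\sum_{\alp}e^{-|x-\alp|^2/\sigma^2}$ is a uniformly bounded Riemann sum for $\int_{\Rd}e^{-|y|^2/\sigma^2}\dif y$; on the part $|\alp|<|x|/2$ one has $|x-\alp|\ge|x|/2$, so splitting $e^{-|x-\alp|^2/\sigma^2}\le e^{-|x|^2/(8\sigma^2)}e^{-|x-\alp|^2/(2\sigma^2)}$ produces a super-exponential factor in $|x|$ that dominates any power.

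For $E_2$, I would combine the rapid decay of the Schwartz function $f_\phi$, namely $|f_\phi(\alp)|\le C_N(1+|\alp|)^{-N}$, with the constraint $|\alp|>1/h$ on the summation. Writing $(1+|\alp|)^{-N}\le h^{N-M}(1+|\alp|)^{-M}$ — legitimate precisely because $1+|\alp|>1/h$ — pulls out a factor $h^{N-M}$ and leaves exactly the convolution sum above. Choosing $M=k$ and $N=2k$ then gives $|E_2(x)|\le Ch^{k}(1+|x|)^{-k}$. This is the step where discarding the far nodes actually buys decay, and the gained power $h^{N-M}$ comes from those nodes sitting at distance larger than $1/h$ from the origin.

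For $E_1$, the main obstacle is that Proposition \ref{NSI} supplies only the spatially \emph{uniform} bound $\norm{E_1}_\infty\le Ch^d e^{-c/h^2}$, with no decay in $x$. I would remedy this by pairing it with a second, $h$-uniform spatial bound $|E_1(x)|\le C_M(1+|x|)^{-M}$, which follows from the Schwartz decay of $f$ together with the convolution estimate applied to the full, untruncated sum $h^d\ful f$. Splitting on $|x|\le 1/h$ versus $|x|>1/h$ then lets me trade one bound against the other: in the first region the super-exponential smallness of $\norm{E_1}_\infty$ beats $h^{k}(1+|x|)^{-k}$ (since $(1+|x|)^{-k}\ge c\,h^{k}$ there), while in the second region $(1+|x|)^{-2k}\le h^{k}(1+|x|)^{-k}$ converts the spare power of spatial decay into the required factor $h^{k}$. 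Adding the contributions of $E_1$ and $E_2$ yields the claimed pointwise estimate.
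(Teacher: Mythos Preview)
Your proof is correct but organized differently from the paper's. You decompose the error as $E_1+E_2$ (full-grid error plus truncation tail) and control each piece globally through your discrete convolution lemma $h^d\sum_{\alp\in h\Zd}(1+|\alp|)^{-M}\phi(x-\alp)\le C(1+|x|)^{-M}$, a clean reusable tool that you invoke for both $E_2$ and for the spatial decay of $h^d\ful f$ inside $E_1$. The paper instead splits spatially first: for $|x|\le 2/h$ it simply quotes the uniform bound of Lemma~\ref{uniform_err} (with exponent $2k$) against $(1+|x|)^{-k}\ge (h/3)^k$; for $|x|\ge 2/h$ it never introduces $\ful f$ at all but bounds $|f(x)|$ and $|h^d\notful f(x)|$ separately by $C(1+|x|)^{-2k}$, the latter via the crude observation that every retained center $\alp$ in $\notful f$ satisfies $|x-\alp|\ge |x|/2$, whence $|h^d\notful f(x)|\le C\phi(x/2)$. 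The paper's route is shorter and more elementary (no auxiliary convolution lemma), while your $E_1$/$E_2$ split is more systematic and isolates precisely where discarding the far nodes buys the factor $h^k$; both are perfectly valid here.
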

\begin{proof}
If $|x|\le 2/h$, then 
$$\bigl(1+|x|\bigr)^{-k}\ge (h/3)^k,$$
hence  the requirement here follows from the inequality in Lemma 
\ref{uniform_err} when $k$ there is replaced by $2k$. 

For the case $|x|\ge 2/h$, we may prove that 
$$|(f - h^d\notful f)(x)| \le C_{k}' \bigl(1+|x|\bigr)^{-2k},$$
since
$$h^{k} \bigl(1+|x|\bigr)^{-k}\ge C |x/2|^{-2k}.$$
To this end, we estimate the difference 
$$f(x)-h^d\notful f(x)$$
directly. First, $f$ decays rapidly, by assumption, hence certainly
satisfies the required estimate. As to $h^d\notful f$, we note that,
since $f_\phi$ decays rapidly, the sum
$$h^d\sum_{\alp\in h\Zd}|f_\phi(\alp)|$$
is bounded, and the bound can be made independent of $h$ (the bound is,
essentially, the $L_1$-norm of $f_\phi$). Thus, we can bound
$\notful f(x)$, up to an $h$-independent constant, by
$$\max\{\phi(x-\alp):\ |\alp|\le 1/h\}.$$
Since $|x|\ge 2/h$, $|x-\alp|\ge |x|/2$, hence
$$h^d\notful f(x)\le C_k' \phi(x/2).$$
Thus we are left to show that
$$\phi(x)\le |x|^{-2k},\ \hbox{for $|x|\ge 1/h$},$$
for small enough $h$, which is clearly valid due to the exponential decay of
$\phi$ at $\infty$.
\end{proof}

\subsection{Gaussian approximation of a wavelet system}
We now assume that we have in hand a finite collection $\Psi\subset H_B$,
with $H_B$ as in the previous section. Then, Lemma \ref{loc_err} holds for each 
$f:=\psi\in\Psi$.
Considering $\Psi$ as the set of mother wavelets in a suitable wavelet
system, we need also to develop suitable approximation schemes for shifted
dilations of $\psi$, i.e., we need approximation schemes and
error bounds for functions of the form
$$\psi\bigl((\cdot-c)/\ell\bigr), \quad \psi\in\Psi, \ c\in\Rd, \ \ell>0.$$
However, such schemes are trivial: since we are allowed to use
shifted-dilated
versions of our original Gaussian $\phi$, we may simply use the approximation
$$\psi\bigl((\cdot-c)/\ell)\approx (h^d\notful \psi)((\cdot-c)/\ell\bigr).$$
Note that $\notful \psi$ employs $N\sim h^{-2d}$ centers. Fixing $N$
momentarily, we define a new 
map, $\final N {}$, that is defined
on all dilated shifts of each $\psi\in\Psi$ by
\begin{equation}\label{E:wave_approx}
(\final N
\psi)\bigl((\cdot-c)/\ell)\bigr):=N^{-1/2}(T^\flat_{N^{-1/(2d)}}\psi)\bigl((\cdot-c)/\ell\bigr).
\end{equation}

The error bounds of the previous section apply directly here. We just need
to replace each occurrence of $h$ by $N^{-1/(2d)}$. Thus, we obtain:

\begin{theorem}\label{loc_err3}
Let $\psi\in H_B$ be given and finite.
Let $k>0$, and let $I$ be a cube. 
Then, there exists a constant $C$ independent of $N$ and $I$ such
that, for every $N$ sufficiently large, and for every $I$ as above,
$$|(\psi_I-\final N {\psi_I})(x)|\le C
N^{-k/d}\left(1+\frac{|x-c(I)|}{\ell(I)}\right)^{-2k}.$$
\end{theorem}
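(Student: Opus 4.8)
The plan is to reduce everything to Lemma~\ref{loc_err} by exploiting the dilation and translation invariance built into the construction. The key observation is purely a matter of bookkeeping: setting $h:=N^{-1/(2d)}$ gives $N^{-1/2}=h^d$, so the defining formula \eqref{E:wave_approx} says precisely that $\final N {\psi_I}=h^d(\notful \psi)_I$, the $I$-rescaled version of the truncated approximant studied in Lemma~\ref{loc_err}. In other words, $\final N {}$ applied to a dilated shift of $\psi$ is nothing but the Lemma~\ref{loc_err} approximant to $\psi$, transported by the same affine map.

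Granting this, I would first perform the change of variables $y:=(x-c(I))/\ell(I)$, under which $\psi_I(x)=\psi(y)$ and $\final N {\psi_I}(x)=h^d(\notful \psi)(y)$, so that
$$(\psi_I-\final N {\psi_I})(x)=(\psi-h^d\notful \psi)(y).$$
This collapses the dilated estimate onto the fixed-scale estimate already in hand. I would then apply Lemma~\ref{loc_err} with the exponent doubled---replacing $k$ by $2k$---to obtain
$$|(\psi-h^d\notful \psi)(y)|\le C_{2k}'\,h^{2k}(1+|y|)^{-2k}.$$
Substituting $h^{2k}=N^{-k/d}$ and $|y|=|x-c(I)|/\ell(I)$ produces exactly the claimed bound. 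The hypothesis that $N$ be sufficiently large is merely the restatement of ``$h$ small enough'' from Lemma~\ref{loc_err}, valid since $h=N^{-1/(2d)}\to 0$. Finally, since $\Psi$ is finite, setting $C:=\max_{\psi\in\Psi}C_{2k}'$ yields a constant uniform over the collection; it is independent of $I$ because the estimate after the substitution no longer references $c(I)$ or $\ell(I)$.

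I do not anticipate a genuine obstacle: the analytic content lives entirely in Lemma~\ref{loc_err}, and the present statement simply records its dilation-invariant, $N$-indexed form. The one point demanding care is verifying that the affine map commutes with the scheme as asserted in \eqref{E:wave_approx}, i.e.\ that approximating $\psi$ and then rescaling coincides with the prescribed approximant to $\psi_I$. This is guaranteed by design, since the Gaussian dictionary is closed under translation and dilation, so no additional analysis is required.
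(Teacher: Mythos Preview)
Your proposal is correct and matches the paper's approach exactly: the paper offers no separate proof, simply remarking that ``the error bounds of the previous section apply directly here; we just need to replace each occurrence of $h$ by $N^{-1/(2d)}$,'' and you have faithfully unpacked that sentence. Your observation that one must invoke Lemma~\ref{loc_err} with $2k$ in place of $k$ to match the exponents $N^{-k/d}$ and $(1+\cdot)^{-2k}$ is the one bookkeeping detail the paper leaves implicit.
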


\section{Nonlinear Approximation in $L_p$, $1\le p<\infty$}
In the previous section, we derived error estimates for the approximation
of each member of a bandlimited smooth wavelet system by suitably chosen
$N$ shifted-dilated Gaussians. Armed with these error estimates, we finally
tackle in this section our central problem: approximating a general
function by finitely many shifted-dilated Gaussians. Our approach follows
\cite{DR} and is similarly based on approximating the
wavelets in the wavelet expansion of the actual approximand. 
To this end, we choose first any, say orthogonal, wavelet
system whose mother wavelets are all bandlimited Schwartz functions.
We define below MRA systems and wavelets in the exact way that
fits our needs. Let us stress that the actual definitions of wavelet 
systems are far more flexible.

\begin{definition}[Wavelets] In this article a  {\it univariate
wavelet system} is an orthonormal MRA wavelet system  whose
generators are bandlimited Schwartz functions:
a scaling function $\eta_0$ and a (mother) wavelet $\eta_1$, both 
bandlimited Schwartz functions. See \cite[3.2]{Meyer} or \cite{Graf} for a 
possible construction.  Multivariate wavelet systems are tensor products
of a univariate one, hence its wavelets
are indexed by $(I,e)$, an ordered pair 
consisting of a dyadic cube, $I$,  and a gender $e\in \E\in\{0,1\}^d 
\setminus \{\mathbf{0}\}$, corresponding to one of the (non-origin) corners 
of the unit cube $[0,1]^d$:  
$$\psi_{e}(x) = \prod_{j=1}^d \eta_{e_j}(x_j), \quad \psi_{I,e}:=(\psi_e)_I.$$
\end{definition}
Let $\D$ be the collection of all dyadic cubes, viz., 
with $I_0$ the unit cube,
$$\D:=\{2^j(k+I_0):\ j\in\Z,\ k\in\Zd\}.$$
We denote by $\D_j$ the subset of dyadic cubes with common edgelength $2^j$.

The wavelet $\psi_{I,e}$ is an affine change of variable (as in Section 1.4) of 
the mother wavelet $\psi_e=\psi_{I_0,e}$, for some $e\in \E$. 
Since we use more than one mother wavelet (indeed, we use $\#\E = 2^d-1$), we regard 
$\D$ and $\D_j$ as multisets and we suppress dependence on the gender $e$.
Thus, the notation $\psi_I$ stands for the $I$-version of {\it any}
of the mother wavelets, and a summation over $\D$ or over one of its subsets, 
unless otherwise noted, is assumed to take place over $\E$ as well. 
This does not cause any confusion, since in this section
our algorithms and their analysis do not pay attention to the details
of the actual mother wavelet that is employed. 

Our problem is then the following basic one. We are given a smooth function
$f$ (from some smoothness class, see below) and a budget of $N$ centers.
We are then allowed to approximate $f$ by a total of $N$ shifted-dilated Gaussians.
We carry out this approximation by distributing the centers across the wavelet
system: for each $I\in\D$, we allocate $N_I$ centers as ``the $I$-budget'' 
and use these budgeted centers for approximating the term $f_I\psi_I$
in the wavelet expansion 
\begin{equation}\label{wav_exp}
f=\sum_{I\in\D} f_I\psi_I.
\end{equation}
The individual error when approximating $f_I\psi_I$ by $N_I$ Gaussians
was the subject of the previous section. Thus, our analysis here will focus
on the estimation of the cumulative error. But, first and foremost, we need
to devise an algorithm for distributing the  budget across the different
wavelets. We refer to this algorithm as the {\it cost distribution.}

\subsection{Triebel-Lizorkin Cost Distribution}
It is convenient to associate each wavelet with {\bf cost}
$\cd_I>0$ that is not necessarily an integer,
and then to determine $N_I$ from the formula
$$
N_{I} := \begin{cases}\lfloor \cd_I \rfloor,&\quad \lfloor \cd_I \rfloor\ge N_0,\\
	0, &\quad\text{otherwise,}
        \end{cases}
$$
where $N_0$ is a some fixed integer, that depends on the wavelet system
and on  nothing else.

We now discuss the cost  distribution $\cd_I$, which depends on several 
factors. In addition to the volume of the dyadic cube, $|I|$, it depends on 
the wavelet coefficient $f_{I}$,
the smoothness norm of $f$ (defined below), and 
an estimate of the size of a partial reconstruction of $f$. To this end, we 
make the following definitions:
\begin{definition}
Given $s,q>0$,
we define the maximal function $M_{s,q}f$ as
\begin{equation}\label{D:Square_FN}
M_{s,q}f(x)
:=
\left(\sum_{I\in \mathcal{D}} |I|^{-sq/d} |f_{I}|^q\chi_I(x) \right)^{1/q}.
\end{equation}
For a dyadic interval $I$, we define a  partial function by
\begin{equation}\label{D:Partial_Square_FN}
M_{s,q,I}f(x)
:=
\left( \sum_{I\subset I'\in\D}  |I'|^{-sq/d} |f_{I'}|^q \chi_{I'}(x)
\right)^{1/q}.
\end{equation}
Given now $\tau,s,q>0$, we define the Triebel-Lizorkin space
$F_{\tau,q}^s$ via the finiteness of the following
quasi-seminorm:
\begin{equation}\label{D:Trieb_Liz}
|f|_{F_{\tau,q}^{s}} := \|M_{s,q}f\|_{\tau}.
\end{equation}
\end{definition}
We note that for any interval $I$, the partial maximal function $M_{s,q,I}f$
is nonnegative and always $\le $ $M_{s,q}f$. Furthermore, it achieves its 
maximum on the interval $I$, where it is constant. Thus the number
$m_{s,q,I}:= M_{s,q,I}f(x)$, $x\in I$, is well-defined, and
$m_{s,q,I} = \sup_{y\in \reals^d} M_{s,q,I}f(y)\le M_{s,q}f(x),\  x\in I.$
In the definition below, $s$ stands for the smoothness of the function
we approximate, and $p$ for the norm in which we measure the error.
\begin{definition}[Cost Distribution]\label{Cost_Distribution}
Let $s>0$, and $p\ge 1$. Define
$\tau,q$ by $1/\tau := 1/p + s/d$ and $1/q := 1 +s/d$. Let
$f\in F_{\tau,q}^s$, with wavelet expansion (\ref{wav_exp}).
We choose then {\bf the cost of a dyadic cube $I\in\D$} as
\begin{equation}\label{cost_dist}
\cd_I: = |f|_{F_{\tau,q}^{s}}^{-\tau}\, m_{s,q,I}^{\tau-q} \, |f_{I}|^q\, |I|^q N.
\end{equation}
\end{definition}
Let us first verify that the sum of all the costs is our budget $N$:
$$
\sum_{I\in\D} 
  \cd_I 
  =\sum_{I\in\D}|f|_{F_{\tau,q}^{s}}^{-\tau}\, 
  m_{s,q,I}^{\tau-q} \, |f_{I}|^q\, |I|^{1-qs/d} N.
$$
Since $|I|=\int_{\reals^d}\chi_I(x)\dif x$, we can write the right 
hand side as an integral, namely as
$|f|_{F_{\tau,q}^{s}}^{-\tau} N \int_{\reals^d} 
    \sum_I \, m_{s,q,I}^{\tau-q} \, |f_{I}|^q\, |I|^{-qs/d} \chi_I(x) 
  \dif x.$ 
Invoking the fact that, for $x\in I$, $m_{s,q,I}\le M_{s,q}f(x)$  (and that $\tau\ge q$), gives
\begin{eqnarray*}
\sum_I c_{I} 
&\le&
|f|_{F_{\tau,q}^{s}}^{-\tau} N
  \int_{\reals^d} 
    \sum_I \, 
      \bigl(M_{s,q}f(x)\bigr)^{\tau-q} \, |f_{I}|^q\, |I|^{-qs/d} \chi_I(x) 
  \dif x\\
&\le& 
|f|_{F_{\tau,q}^{s}}^{-\tau} N 
  \int_{\reals^d}  
    \bigl(M_{s,q}f(x)\bigr)^{\tau} \, 
  \dif x
=N.
\end{eqnarray*}
%
\subsection{Approximating the Wavelet Expansion}
Once a budget of $N_I$ centers is allocated for the approximation of
the term $f_I\psi_I$ in the wavelet expansion of $f$, we appeal to
Theorem \ref{loc_err3} to conclude that the term can be approximated by
$N_I$ Gaussians with error that is bounded (up to a constant that depends
only on the wavelet system and on the parameter $k$) by $|f_I|R_I$, where
\begin{eqnarray}\label{cost_dist_est}
R_{I}(x)
&:=& 
  C_{k,d}
  \min(1, N_{I}^{-k/d}) 
  \left(1+\frac{\dist(x,I)}{\ell(I)}\right)^{-2k}\nonumber\\
&\le& 
  C_{k,d}'
  \min(1, \cd_I^{-k/d}) 
  \left(1+\frac{\dist(x,I)}{\ell(I)}\right)^{-2k}.
\end{eqnarray}
The following lemma, which is proved in the next subsection, simplifies
the above error:
\begin{lemma}\label{F-S}
Let $1\le p < \infty$, then
$$
\left\|
  \sum_{I\in \D} 
    |f_{I}|\, R_{I}
\right\|_p
\le 
C_{k,d}
\left\|
  \sum_{I\in\D}
    \min(1, \cd_I^{-k/d}) 
    f_{I}\chi_I
\right\|_p.
$$
\end{lemma}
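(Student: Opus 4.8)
The goal is to bound the $L_p$ norm of the error superposition $\sum_I |f_I| R_I$ by a cleaner expression in which the slowly decaying tails $\bigl(1+\dist(x,I)/\ell(I)\bigr)^{-2k}$ have been absorbed, leaving only the localized functions $\min(1,\cd_I^{-k/d}) f_I \chi_I$. The obstruction is entirely the overlap of the tails: each $R_I$ is supported (effectively) far beyond its own cube $I$, so summing over all of $\mathcal{D}$ could in principle blow up. My plan is to replace each tail-weighted term by a maximal-function estimate that confines the mass back onto the cubes themselves.

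The plan is to proceed as follows. First I would recall from \eqref{cost_dist_est} that $R_I(x) \le C_{k,d}' \min(1,\cd_I^{-k/d})(1+\dist(x,I)/\ell(I))^{-2k}$, so the quantity to be controlled is a sum over $I\in\D$ of terms of the form $g_I \cdot (1+\dist(x,I)/\ell(I))^{-2k}$, where $g_I := \min(1,\cd_I^{-k/d})|f_I|$ is a nonnegative constant attached to the cube $I$. The key observation is that the slowly decaying kernel $(1+\dist(x,I)/\ell(I))^{-2k}$, when $2k>d$, is dominated by the Hardy–Littlewood maximal operator applied to $\chi_I$: that is, $(1+\dist(x,I)/\ell(I))^{-2k} \le C (M\chi_I(x))^{\lambda}$ for a suitable exponent $\lambda\in(0,1]$, or more directly one shows $\sum_I g_I (1+\dist(x,I)/\ell(I))^{-2k} \le C\, M\bigl(\sum_I g_I \chi_I\bigr)(x)$ after organizing the cubes by dyadic scale. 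This is the standard mechanism by which polynomial tails are traded for a maximal function: for each fixed scale $j$ the cubes in $\D_j$ tile $\reals^d$, so the tails at that scale sum to the maximal function of the level-$j$ piece, and then one sums the geometric series over scales using $2k>d$.

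The second step would then invoke the Fefferman–Stein vector-valued maximal inequality, or simply the boundedness of $M$ on $L_p$ for $1<p<\infty$ (with the scale-by-scale summation arranged so that a single application of $M$ suffices), to conclude
\[
\Bigl\| \sum_I g_I (1+\dist(\cdot,I)/\ell(I))^{-2k}\Bigr\|_p
\le C \Bigl\| M\bigl(\textstyle\sum_I g_I \chi_I\bigr)\Bigr\|_p
\le C \Bigl\| \sum_I g_I \chi_I \Bigr\|_p,
\]
which is exactly the claimed bound once $g_I$ is written back as $\min(1,\cd_I^{-k/d})|f_I|$. The case $p=1$ requires separate care, since $M$ is not bounded on $L_1$; there I would instead integrate directly, using that $\int_{\reals^d}(1+\dist(x,I)/\ell(I))^{-2k}\dif x \le C|I|$ for $2k>d$ to get the $L_1$ estimate term by term and then apply the triangle inequality.

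The main obstacle I expect is the tail-summation bookkeeping: verifying that $\sum_{I\in\D} g_I(1+\dist(x,I)/\ell(I))^{-2k}$ really is controlled by a single maximal function rather than blowing up. The delicate point is that cubes of very different sizes can all contribute at a given $x$, so one must exploit the decay $2k>d$ both within each scale (to sum the spatial tails) and across scales (to sum the geometric factors coming from how the kernel sees cubes of edgelength $2^j$). Choosing $k$ large enough that $2k>d$ comfortably, and choosing whether to route through Fefferman–Stein or through a direct pointwise maximal-function domination, is where the real work lies; the final $L_p$ boundedness is then routine.
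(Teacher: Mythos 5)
Your overall blueprint --- dominate the tails by maximal functions of $\chi_I$ and then invoke a vector-valued maximal inequality --- is the same as the paper's, and your separate treatment of $p=1$ by direct integration ($\int_{\reals^d}(1+\dist(x,I)/\ell(I))^{-2k}\dif x\le C_{k,d}|I|$ plus the triangle inequality) is correct. However, the step you rely on for $1<p<\infty$ has a genuine gap. The pointwise domination
\[
\sum_{I\in\D} g_I\Bigl(1+\frac{\dist(x,I)}{\ell(I)}\Bigr)^{-2k}\le C\,M\Bigl(\sum_{I\in\D} g_I\chi_I\Bigr)(x)
\]
is false. It does hold scale by scale (for fixed $j$ the left side restricted to $\D_j$ is a convolution of $F_j:=\sum_{I\in\D_j}g_I\chi_I$ with an $L_1$-normalized decreasing kernel, hence $\le C\,MF_j(x)$), but there is no geometric gain across scales: already in $d=1$, taking $g_{I_j}=1$ for the dyadic intervals $I_j=2^j(2+[0,1])$, $j=0,\dots,J$, the left side at $x=0$ equals $(J+1)3^{-2k}$, while $M(\sum_j\chi_{I_j})(0)\le 1$, because any interval containing $0$ meets the pairwise disjoint $I_j$ in a set of measure at most half its length. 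So a single application of $M$ cannot work. Falling back on Fefferman--Stein does not rescue the argument either: what you would need is $\|\sum_j MF_j\|_p\le C\|\sum_j F_j\|_p$, i.e.\ the vector-valued inequality with inner exponent $q=1$, which lies outside the range $1<q<\infty$ of \cite[Theorem 1]{FeSt} and is in fact false for $M$ (take $F_j=\chi_{[j/N,\,j/N+1/N]}$, $j=1,\dots,N$; the left side gains a factor $\log N$).

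The paper closes exactly this gap with the sub-averaged maximal operator $M_\tau$ with $\tau<1$. Choosing $d/(2k)<\tau<1$ one has $(1+\dist(x,I)/\ell(I))^{-2k}\le(1+\dist(x,I)/\ell(I))^{-d/\tau}\le C_d\,M_\tau(\chi_I)(x)=C_d\,\bigl(M\chi_I(x)\bigr)^{1/\tau}$, so the maximal function appears with an exponent $1/\tau>1$; note that your proposed exponent $\lambda\in(0,1]$ on $M\chi_I$ points in the unhelpful direction. Since $\chi_I^{\tau}=\chi_I$, the required bound for the $\ell_1$-sum over $I$ becomes, after raising to the power $\tau$, the genuine Fefferman--Stein inequality with inner exponent $1/\tau>1$ and outer exponent $p/\tau>1$ (inequality (\ref{H-L}) of the paper), which is valid for all $1\le p<\infty$ --- including $p=1$ --- in one stroke. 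Without this device (or an equivalent one) your argument does not go through for $1<p<\infty$.
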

%

We are ready to state and prove our main result concerning the case
$1\le p<\infty$.

\begin{theorem}\label{p_main}
Given $s>0$  and $1\le p <\infty$, there is a constant $C_{p,s,d}$ so that for 
$f\in F_{\tau,q}^s$, with $1/\tau = 1/p + s/d$ and $1/q = 1 +s/d$, there is a
linear combination of $N$ Gaussians
$s_f(x):= \sum_{j=1}^N A_j \exp\bigl[-\left(\frac{x-\xi_j}{\sigma_j}\right)^2\bigr]$ 
so that
$$
\|f - s_f\|_p
\le 
C_{p,s,d} N^{-s/d} |f|_{F_{\tau,q}^s}.
$$
\end{theorem}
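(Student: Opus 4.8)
The plan is to build $s_f$ directly from the wavelet expansion $f=\sum_{I\in\D}f_I\psi_I$ by replacing each wavelet term with its Gaussian surrogate. Using the cost distribution of Definition \ref{Cost_Distribution} to fix the integers $N_I$, I set
$$s_f:=\sum_{I\in\D,\, N_I>0}f_I\,\final{N_I}{\psi_I},$$
where each $\final{N_I}{\psi_I}$ is the finite Gaussian approximant supplied by Theorem \ref{loc_err3}. Since the Gaussians are closed under shifts and dilations, every $\final{N_I}{\psi_I}$ is an honest linear combination of at most $N_I$ Gaussians, so $s_f$ uses $\sum_I N_I$ of them. The first thing to verify is the budget: because $N_I\le\cd_I$ by construction, the computation carried out right after Definition \ref{Cost_Distribution} (which used $m_{s,q,I}\le M_{s,q}f$ on $I$, $\tau\ge q$, and $\int_{\Rd}(M_{s,q}f)^\tau=|f|_{F_{\tau,q}^s}^\tau$) gives $\sum_I N_I\le\sum_I\cd_I\le N$. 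Hence $s_f\in\mathbb{G}_N$, padding with zero coefficients if fewer than $N$ Gaussians are used.

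Next I would reduce the error to a purely combinatorial quantity. Subtracting term by term,
$$f-s_f=\sum_{I\in\D}f_I\bigl(\psi_I-\final{N_I}{\psi_I}\bigr),$$
so the triangle inequality together with the pointwise bound of Theorem \ref{loc_err3}, recorded via the per-cube error $R_I$ in \eqref{cost_dist_est}, gives $\|f-s_f\|_p\le\bigl\|\sum_I|f_I|R_I\bigr\|_p$. Feeding this into Lemma \ref{F-S} removes the slowly decaying tails $\bigl(1+\dist(x,I)/\ell(I)\bigr)^{-2k}$ at the cost of a constant and, after passing to absolute values, leaves the single estimate
$$\Bigl\|\sum_{I\in\D}\min\bigl(1,\cd_I^{-k/d}\bigr)\,|f_I|\,\chi_I\Bigr\|_p\le C_{p,s,d}\,N^{-s/d}\,|f|_{F_{\tau,q}^s},$$
to be established once $k$ is chosen large enough in terms of $s,p,d$.

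This last inequality is the crux, and where I expect the real work. The starting observation is that $\cd_I$ is invariant under $f\mapsto\lambda f$ — the role of the factor $|f|_{F_{\tau,q}^s}^{-\tau}$ — so after rescaling I may assume $|f|_{F_{\tau,q}^s}=1$, in which case $\int_{\Rd}(M_{s,q}f)^\tau=1$, $\sum_I\cd_I\le N$, and $\cd_I=N\,m_{s,q,I}^{\tau-q}\,|f_I|^q\,|I|^q$. I would then examine the sum pointwise along the cubes containing a fixed $x$ (finitely many nested chains, one per gender), recalling that the $m_{s,q,I}$ are monotone in the scale and that the $|I|^{-sq/d}|f_I|^q$ are exactly the summands assembling $M_{s,q}f(x)^q$. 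Taking $k$ large makes $\min\bigl(1,\cd_I^{-k/d}\bigr)$ negligible on every cube with $\cd_I\gg1$ and renders the series over $\D$ absolutely convergent, while on the cubes with $\cd_I\lesssim1$ the weight is $O(1)$ and the surviving mass is controlled by $M_{s,q}f$.

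The delicate point — and the main obstacle — is the exponent bookkeeping that turns these per-chain estimates into the global $L_p$ bound with the sharp power $N^{-s/d}$. The single power of $N$ has to be extracted from the budget constraint $\sum_I\cd_I\le N$ in combination with $\int_{\Rd}(M_{s,q}f)^\tau=1$, and the two defining relations $1/\tau=1/p+s/d$ and $1/q=1+s/d$ are precisely what make the arithmetic of the exponents balance. The single-active-scale case (all budget on one cube, $\cd_I=N$) already exhibits the mechanism, producing an $L_p$ norm equal to $N^{-k/d}\le N^{-s/d}$ and showing that the worst case is a budget spread across many scales rather than concentrated. Getting this multiscale summation to close uniformly over all $f\in F_{\tau,q}^s$, as opposed to the transparent single-scale model, is the part that requires genuine care; once it is in hand, the theorem follows by assembling the construction, the budget count, Theorem \ref{loc_err3}, and Lemma \ref{F-S}.
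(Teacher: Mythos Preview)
Your framework is exactly the paper's: build $s_f=\sum_I f_I\,T_{N_I}\psi_I$, check $\sum_I N_I\le N$ from the cost computation after Definition~\ref{Cost_Distribution}, and use Theorem~\ref{loc_err3} plus Lemma~\ref{F-S} to reduce to
\[
\Bigl\|\sum_{I\in\D}\min\bigl(1,\cd_I^{-k/d}\bigr)\,|f_I|\,\chi_I\Bigr\|_p.
\]
The gap is the last step, which you explicitly leave open. Your suggested attack---split into $\cd_I\gg1$ versus $\cd_I\lesssim1$ and push $k$ large---is not how the paper closes it, and as written it is not clear it would close: on the small-cost cubes the weight is $O(1)$ and you still have to sum $\sum_I|f_I|\chi_I$ over infinitely many scales, which is not obviously controlled by $M_{s,q}f$ alone.

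The paper's route avoids any case split. First, for $k>s$ one has the uniform bound $\min(1,\cd_I^{-k/d})\le\cd_I^{-s/d}$ (valid whether $\cd_I\ge1$ or $\cd_I<1$), so it suffices to estimate $\sum_I E_I(x)$ with $E_I:=\cd_I^{-s/d}|f_I|\chi_I$. Substituting the explicit formula for $\cd_I$ and using the exponent relations $1-\tau s/d=\tau/p$ and $1-qs/d=q$ gives
\[
E_I(x)=N^{-s/d}\,|f|_{F_{\tau,q}^s}^{\tau s/d}\,\bigl(M_{s,q,I}f(x)\bigr)^{\tau/p-q}\,|f_I|^{q}\,|I|^{-qs/d}\,\chi_I(x).
\]
Now set $z_I:=|f_I|^q|I|^{-qs/d}\chi_I(x)$ and $Z_I:=\sum_{I\subset I'}z_{I'}=\bigl(M_{s,q,I}f(x)\bigr)^q$, so that $Z:=\sum_I z_I=\bigl(M_{s,q}f(x)\bigr)^q$ and the whole sum becomes $N^{-s/d}|f|^{\tau s/d}\sum_I z_I Z_I^{\tau/(pq)-1}$. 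The missing engine is the elementary partial-sum inequality \cite[Lemma~6.3]{DR}: for nonnegative $z_I$ and $\epsilon>0$, $\sum_I z_I Z_I^{\epsilon-1}\le C_\epsilon Z^{\epsilon}$. Applying it with $\epsilon=\tau/(pq)$ yields the pointwise bound
\[
\sum_{I\in\D}E_I(x)\le C\,N^{-s/d}\,|f|_{F_{\tau,q}^s}^{\tau s/d}\,\bigl(M_{s,q}f(x)\bigr)^{\tau/p},
\]
and taking the $L_p$ norm gives $C\,N^{-s/d}|f|^{\tau s/d+\tau/p}=C\,N^{-s/d}|f|_{F_{\tau,q}^s}$. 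This Abel-type lemma is precisely the ``exponent bookkeeping'' you were looking for; without it the multiscale sum does not collapse.
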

\begin{proof}
Using the coefficients of the wavelet expansion (\ref{wav_exp}), we
can express $s_f$ as 
$$s_f:= \sum_{} f_{I}\,T_{N_I}\psi_{I},$$
where each term, 
$\left[T_{N_I}\psi_{I}\right](x) 
= 
\sum_{j=1}^{N_I} 
  a_{I,j} \exp \left[-\left(\frac{x-c_{I,j}}{\ell(I)}\right)^2\right],$ 
defined in (\ref{E:wave_approx}), is composed of $N_I$ Gaussians by the 
construction preceding Theorem \ref{loc_err3} (note that the notation
$\cd_I$ stands for the $I$-cost, and is very different from the notation 
$c_{I,j}$ above). By the enumeration at the end 
of Section 3.1 ($\sum_{I\in\D} N_I \le N$), we know that no more than $N$ 
Gaussians are used.

From Lemma \ref{F-S} we have the error estimate 
$$
\|f - s_f\|_p
\le 
C_{k,d}
\left\|
  \sum_{I\in\D} 
    \min(1, \cd_I^{-k/d}) 
    |f_{I}|\chi_I
\right\|_p.
$$
As long as $k$ (which is arbitrary) is greater than $s$, 
we can estimate the error as the $L_p$ norm of a series:
$$\|f - s_f\|_p 
\le 
C_{k,d}
\left\|
  \sum_{I\in\D} 
    E_I
\right\|_p,
$$ 
where 
$E_I(x):= \cd_I^{-s/d}\,|f_{I}|\,\chi_I(x).$
We now focus on estimating this series, pointwise.

By applying the definition of $\cd_I$, we obtain 
(after some elementary manipulation of exponents),
$
\cd_I^{-s/d} |f_I|  
=  
|f|_{F_{\tau,q}^s}^{\tau s/d}
m_{q,s,I}^{\tau/p-q} 
|f_I|^{q} \,   |I|^{-qs/d}  N^{-s/d}.
$
We recall that the $I^{\text{th}}$ partial square-like 
function is constant on the cube $I$, where it equals 
$m_{q,s,I}$. This implies that 
$\chi_I(x) m_{q,s,I} = \chi_I(x) M_{q,s,I}(x)$, 
which shows that each term is
$$
E_I(x) 
=
N^{-s/d} 
|f|_{F_{\tau,q}^s}^{\tau s/d} 
M_{q,s,I}(x)^{\tau/p-q} 
|f_I|^{q}\, 
|I|^{-qs/d} 
\chi_I(x).
$$
The series becomes much more manageable by making some simple substitutions.
Writing the basic summand of the maximal function as 
$z_I:= |f_I|^{q} |I|^{-qs/d} \chi_I(x)$, 
the partial sum of these basic summands, 
$Z_I:= \sum_{I\subset I'} z_{I'}$, 
is observed to be the $q^{\text{th}}$ power of the partial maximal function
$Z_I = \bigl(M_{s,q,I}(x)\bigr)^q$, 
while the full sum of these, 
$Z:=\sum_{I\in \D}  z_I$, 
is simply the $q{\text{th}}$ power of the (full) maximal function
$Z = \bigl(M_{s,q}(x)\bigr)^q$.
It is a simple observation that the full series under consideration 
now has the compact form
$$
\sum_{I\in\D} E_I(x) 
= 
N^{-s/d} |f|_{F_{\tau,q}^s}^{\tau s/d} 
\sum_{I\in\D}
  z_I Z_I^{\frac{\tau}{pq}-1}.
$$
It follows from the inequality 
$\sum_{I\in\D} z_I Z_I^{\epsilon-1}\le C_{\epsilon} Z^{\epsilon}$, 
valid for nonnegative sequences $(z_I)_{I\in \D}$ and  
$0<\epsilon$ with constant $C_\epsilon<\infty$ 
(this is \cite{DR}[Lemma 6.3]), that 
$$
\sum_{I\in\D} E_I(x)
\le 
C_{\frac{\tau}{pq}}
N^{-s/d} |f|_{F_{\tau,q}^s}^{\tau s/d} 
\left(
  \bigl(M_{s,q}(x)\bigr)^q
\right)^{\frac{\tau}{pq}} 
= 
C_{p,s,d} 
N^{-s/d} |f|_{F_{\tau,q}^s}^{\tau s/d} 
\bigl(M_{s,q}(x)\bigr)^{\tau/p}.
$$
Taking the $L_p$ norm controls the error:
\begin{eqnarray*}
\left(
  \int_{\reals^d}
    \left(
      \sum_{I\in\D} 
        E_I(x)
    \right)^p
  \dif x
\right)^{1/p}
&\le& 
C_{p,s,d} 
N^{-s/d} |f|_{F_{\tau,q}^s}^{\tau s/d} 
\left(
  \int_{\reals^d} 
    \bigl(M_{s,q}(x)\bigr)^{\tau} 
  \dif x
\right)^{1/p}\\ 
&=& 
C_{p,s,d} 
N^{-s/d} |f|_{F_{\tau,q}^s}^{\tau s/d +\tau/p} 
=  
C_{p,s,d} 
N^{-s/d} |f|_{F_{\tau,q}^s}
\end{eqnarray*}
since $\tau s/d +\tau/p =1$.
\end{proof}
%

\subsection{On Lemma \ref{F-S}}
The vector-valued maximal inequality of Fefferman and Stein, \cite[Theorem 
1]{FeSt},    controls the $L_r(\ell_s)$ norm of the sequence of functions 
$(MF_j)_j$ by the $L_r(\ell_s)$ norm of $(F_j)_j$, provided $1<r,s<\infty$
(the operator $M$ is the usual Hardy--Littlewood maximal operator
$MF(x):= \sup_{x\in [a,b]^d} \frac{1}{(b-a)^d} \int_{[a,b]^d}|F(y)|\dif y$ ):
$$
\left\|\left(
  \sum |MF_j(x)|^s
\right)^{1/s}\right\|_r 
\le 
C_{r,s} 
\left\|\left(
  \sum |F_j(x)|^s
\right)^{1/s}\right\|_r
$$
In the lemma we make use of a minor generalization of this for the 
modified maximal operator $M_{\tau}$, defined for $0<\tau<\infty$ by
$$
M_{\tau}f(x)
:= 
\sup_{x\in [a,b]^d} 
\left(
  \frac{1}{(b-a)^d} 
    \int_{[a,b]^d}
      |f(y)|^{\tau}
    \dif y 
\right)^{\frac{1}{\tau}}.$$
It is not difficult to show that for $\tau<p,q<\infty$,
\begin{equation}\label{H-L}
\left\|\left(
  \sum 
    |M_{\tau}f_j(x)|^q
\right)^{1/q} \right\|_p 
\le 
K 
\left\|\left(
  \sum 
    |f_j(x)|^q
\right)^{1/q}\right\|_p.
\end{equation}
Indeed, this follows by a direct application of the Fefferman--Stein inequality with  $s=\tfrac{q}{\tau}$, $r=\tfrac{p}{\tau}$ (both greater than one), 
$F_j = f_j^{\tau}$ and $K=C_{r,s}^{1/\tau}$,
because the modified maximal operator is related to the Hardy-Littlewood maximal operator by $[MF_j]^s = [M_{\tau}f_j]^q$ and the $r$ and $p$ norms are related by
 $\|g\|_r = \|g^{1/\tau}\|_p^{\tau}.$

\begin{proof}[Proof of Lemma \ref{F-S}]
From (\ref{cost_dist_est}), it follows that
$$R_{I}\le C_{k,d}
\min(1, \cd_I^{-k/d}) \left(1+\frac{\dist(x,I)}{\ell(I)}\right)^{-2k}.$$ 
Observe that there is a constant, $C_d$, depending only on $d$ so that
$$ 
\left(
  1+\frac{\dist(x,I)}{\ell(I)}
\right)^{-d}
\le 
M(\chi_I)(x).
$$
We can assume $k>d/2$ without loss of generality. For $\tau$ between $d/2k$ and $1$ we have
$$
\left
  (1+\frac{\dist(x,I)}{\ell(I)}
\right)^{-2k}
\le 
\left
  (1+\frac{\dist(x,I)}{\ell(I)}
\right)^{-d/\tau}
\le 
C_d M_{\tau}(\chi_I)(x),
$$
since $M_{\tau}(\chi_I) = M(\chi_I)^{1/\tau}.$
It follows from the modified Fefferman-Stein inequality (\ref{H-L}), that
\begin{eqnarray*}
\left\|\sum_{I\in \mathcal{D}} |f_{I}|\, R_I\right\|_p 
&\le&
C_{k,d}
\left\|
  \sum_{I\in \mathcal{D}} 
    |f_{I}|\, 
    \min(1, \cd_I^{-k/d}) 
    M_{\tau}(\chi_I)
\right\|_p\\
&\le&
C_{k,d}
\left\|
  \min(1, \cd_I^{-k/d}) 
  |f_{I}| \chi_I 
\right\|_p.
\end{eqnarray*}
\end{proof}
%
\section{Nonlinear Approximation in $L_{\infty}$}
Although the basic strategy for nonlinear RBF approximation in $L_{\infty}$ is,
at heart, the same as in $L_p$, there are some complications that require us to
 give it a slightly different treatment. The fundamental difference is that 
the Hardy-Littlewood maximal inequality 
(and, hence, its vector valued analogue, the Fefferman-Stein inequality, used in the previous section) does not hold for $p=\infty$. 
For this reason, we choose to work with family of smoothness spaces 
that do not require us to explicitly work with maximal operators. Smoothness
is measured using a Besov norm, and we use a Besov space based cost
distribution to determine how to distribute the budget.\footnote{The Besov
space approach is valid for the case $p<\infty$ that was analysed in the
previous section, too. However, the Triebel-Lizorkin space $F^s_{\tau,q}$ is
slightly larger than the Besov space of the same parameters.}

\begin{definition}
For $\tau = d/s\in (0,\infty)$ and $q\in (0,\infty)$, the Besov space $B_{\tau,q}^{s}$
is the space of $L_{\tau}$ functions for which the (quasi-)seminorm
$|f|_{B_{\tau,q}^{s}}$ is finite, where
$$
|f|_{B_{\tau,q}^{s}} 
:= 
\left\|
  k 
  \mapsto 
  \left(
    \sum_{I\in \mathcal{D}_k}
         |f_{I}|^{\tau}
  \right)^{1/\tau}
\right\|_{\ell_q(\ints)}.
$$
Here, the coefficients $(f_I)$ are as in (\ref{wav_exp}).
\end{definition}
Note that for  
$q\le \tau\le1$, $f\in B_{\tau,q}^{s}$ implies that the  wavelet coefficients $f_{I}$ are absolutely summable. Since the wavelets $\psi_{I}$ are uniformly bounded, this means that the wavelet expansion (\ref{wav_exp}) is absolutely convergent for $s\ge d$ and $f\in B_{\tau,q}^s$ (meaning that the main issue for $L_{\infty}$ approximation is resolved in this case). For $1<\tau<\infty$ and $q\le1$, we also have unconditional convergence of the wavelet expansion, since $$\sum_{k\in \ints} \sum_{I\in \D_k} |f_I| |\psi_I(x)| \le \sum_{k\in \ints} \left(\sum_{I\in \D_k} |f_I|^\tau\right)^{1/\tau} \left(\sum_{I\in\D_k} |\psi_I(x)|^{\tau'} \right)^{1/\tau'}.$$
Because $|\psi_I(x)|^{\tau'} \le C\bigl(1+|x-c(I)|/\ell(I)\bigr)^{-(d+1)}$, the second factor is bounded with a constant depending on $d$ and totally independent of $k$ and $x$. Thus, the right hand
side is controlled by $|f|_{B_{\tau,q}^{s}}$. This is a reflection of the fact that 
$B_{\tau,q}^s$ is embedded in $L_{\infty}$ for $\tau = d/s$ and $q\le \min(\tau,1)$.  Although $L_{\infty}$ has no unconditional basis, the Besov space does; the wavelet expansion
(\ref{wav_exp}) converges unconditionally in these cases.

\subsection{\bf  Besov Cost Distribution:}
The approach we take for treating $L_{\infty}$ error is to alter the strategy for budgeting slightly.
As before, for each wavelet $\psi_{I}$, we create an approximant 
$\gapp{N_{I}}\psi_{I}$ using a portion $N_{I}$ of the total budget $N$, 
but the precise distribution of this budget follows different rules. 
We rely again on a cost distribution. 
In this case, it is:
\begin{equation}\label{B_cost_dist}
\cd_{I} = N |f|_{B_{\tau,q}^s}^{-q} A_j^{q-\tau} |f_{I}|^{\tau}
=N \left(\frac{A_j}{|f|_{B_{\tau,q}^s}}\right)^q
\left(\frac{|f_{I}|}{A_j}\right)^\tau.
\end{equation}

The indices $\tau$ and $q$ are determined by $1/\tau = s/d$ and $1/q = 1 +s/d$.
The quantity $A_j$ is a sort of ``energy'' of $f$ at the dyadic level $j$:
$$A_j:= \left( \sum_{I\in \D_j} |f_{I}|^{\tau}\right)^{1/\tau}.$$
We  do not invest in the wavelet corresponding to 
$I$ if  $\lfloor \cd_{I}\rfloor \le N_0$ (the constant from
Lemma \ref{loc_err}).  Thus, we set 
\begin{equation}\label{first_inf_budget}
N_{I} 
= 
\begin{cases}
  \lfloor \cd_{I}\rfloor 
  = 
  \left\lfloor 
    N |f|_{B_{\tau,q}^s}^{-q} A_j^{q-\tau} |f_{I}|^{\tau}
  \right\rfloor,\quad
  &
  \lfloor \cd_{I}\rfloor \ge N_0\\
  0&\text{otherwise.}
\end{cases}
\end{equation}
With this choice at most $N$ Gaussians are used:
\begin{eqnarray*}
\sum_{I\in \D} N_{I} &\le& 
\sum_{I\in \D} \cd_{I}
\le
N |f|_{B_{\tau,q}^s}^{-q}  
\sum_{j=0}^{\infty} 
  A_j^{q-\tau}
  \sum_{I\in \D_j}
    |f_{I}|^{\tau}\\
&=& 
N |f|_{B_{\tau,q}^s}^{-q}  
\sum_{j=0}^{\infty} 
  A_j^{q-\tau} A_j^{\tau} = N.\\
\end{eqnarray*}

\subsection{Approximating the Wavelet Expansion:}
The following lemma is
a rough analogue to Lemma \ref{F-S}, where we show that the investment of centers made
in one ``energy level'' gives a suitable error.
\begin{lemma}\label{energy}
For a finitely supported sequence of coefficients $(a_{I})_{I\in \D_j}$, we have the estimate, which holds for $2k>d$:
$$
\left\|
  \sum_{I\in\mathcal{D}_j}
    a_{I} \psi_{I} 
  -
   \sum_{I\in\mathcal{D}_j}
     a_{I} [\gapp{N_{I}}\psi_{I}]
\right\|_{\infty} 
\le 
C_{k,d} \sup_{I\in\D_j}
\left|a_{I}  N_{I}^{-k/d}\right|.
$$
\end{lemma}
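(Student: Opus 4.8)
The plan is to reduce the whole superposition to a single scalar estimate per cube, supplied by Theorem \ref{loc_err3}, and then to control the resulting sum of pointwise bounds by a lattice sum that converges precisely because $2k>d$. The key structural feature I would exploit is that every $I\in\D_j$ has the \emph{common} edgelength $\ell(I)=2^j$, so all the error bumps coming from Theorem \ref{loc_err3} are translates and rescalings of a single profile centered on a regular grid.

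First I would apply Theorem \ref{loc_err3} to each wavelet separately. For each $I\in\D_j$ with $N_I\ge 1$ the theorem gives
$$
\bigl|(\psi_I-\gapp{N_I}\psi_I)(x)\bigr|
\le C_{k,d}\, N_I^{-k/d}\left(1+\frac{|x-c(I)|}{2^j}\right)^{-2k},
$$
and $N_I\ge N_0$ is large enough for the theorem to apply whenever $N_I\neq 0$. (If some $a_I\neq0$ has $N_I=0$, the right-hand side $\sup_{I\in\D_j}|a_I N_I^{-k/d}|$ of the lemma is $+\infty$ and there is nothing to prove, so I may assume $N_I\ge 1$ throughout the finite support.) Summing over $I$ and using the triangle inequality for $\norm{\cdot}_\infty$, which is legitimate since the sum is finite, the error at a point $x$ is bounded by $C_{k,d}\sum_{I\in\D_j}|a_I|\,N_I^{-k/d}\bigl(1+|x-c(I)|/2^j\bigr)^{-2k}$.

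Next I would factor the supremum out of the sum. Writing $M:=\sup_{I\in\D_j}|a_I N_I^{-k/d}|$, each coefficient satisfies $|a_I|\,N_I^{-k/d}=|a_I N_I^{-k/d}|\le M$, so the bound becomes
$$
C_{k,d}\,M\sum_{I\in\D_j}\left(1+\frac{|x-c(I)|}{2^j}\right)^{-2k}.
$$
It therefore remains to show that this lattice sum is bounded by a constant depending only on $k$ and $d$, uniformly in $x$ and in $j$, after which the lemma follows.

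The heart of the matter is exactly this lattice-sum estimate, and it is here that the hypothesis $2k>d$ is used. The corners $c(I)$ of the level-$j$ dyadic cubes form the scaled lattice $2^j\Zd$, and, since $\D_j$ is treated as a multiset in which each geometric cube is repeated once per gender (contributing a harmless factor $2^d-1$), the substitution $z\mapsto z/2^j$ shows that, uniformly in $x$ and $j$, the sum is comparable to $\sum_{m\in\Zd}\bigl(1+|y-m|\bigr)^{-2k}$ with $y=x/2^j$. Comparing this with the integral $\int_{\Rd}(1+|z|)^{-2k}\dif z$ shows that it is bounded independently of $y$ precisely when $2k>d$, giving a constant of the form $C_{k,d}$. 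The only genuinely substantive point is the convergence and uniform boundedness of this lattice sum; the remaining ingredients (termwise application of Theorem \ref{loc_err3}, the triangle inequality, and factoring out the supremum) are routine. I would expect the scale-invariance step that delivers uniformity in $j$ to be the place where care is needed, since it is what lets a single constant serve for every dyadic level.
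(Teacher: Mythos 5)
Your proposal is correct and follows essentially the same route as the paper: termwise application of Theorem \ref{loc_err3}, an $\ell_\infty\times\ell_1$ H\"older step to factor out $\sup_I|a_I N_I^{-k/d}|$, and the uniform boundedness of the lattice sum $\sum_{I\in\D_j}\bigl(1+\dist(x,I)/\ell(I)\bigr)^{-2k}$ under the hypothesis $2k>d$. The paper states the lattice-sum bound without proof, whereas you supply the rescaling and integral-comparison argument (and the harmless $N_I=0$ edge case), which are welcome but not substantively different.
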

\begin{proof}
We treat the estimate by considering the error termwise.  
Theorem \ref{loc_err3}  gives the pointwise bound
$$
  \sum_{I\in\mathcal{D}_j}
    |a_{I}| 
    \left|
      \psi_{I}(x) - \left[\gapp{N_{I}}\psi_{I}\right](x)
    \right|
\le 
C_{k,d}
\sum_{I\in\mathcal{D}_j}  
  |a_{I}| N_{I}^{-k/d}  
  \left(1+\frac{\dist(x,I)}{\ell(I)}\right)^{-2k}.
$$
By applying H{\" o}lder's inequality, the lemma follows, since for $2k>d$, the series $\sum_{I\in \D_j} 
\left(1+\frac{\dist(x,I)}{\ell(I)}\right)^{-2k}$ is bounded by a finite  
constant $C_{k,d}$ that is independent of both $j$ and $x$. 
\end{proof}
We are now in a position to prove our main result for $L_{\infty}$ 
approximation.
\begin{theorem}\label{inf_first}
Given $s>0$, there is a constant $C_{s,d}$ so that for 
$f\in B_{\tau,q}^s$, with $1/\tau =  s/d$ and $1/q = 1 +s/d$, there is a
linear combination of $N$ Gaussians
$s_f(x):= \sum_{j=1}^N A_j \exp\left[-(\frac{x-\xi_j}{\sigma_j})^2\right]$ 
so that
$$
\|f - s_f\|_{\infty}
\le 
C_{s,d} N^{-s/d} |f|_{B_{\tau,q}^s}.
$$
\end{theorem}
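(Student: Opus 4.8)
The plan is to build $s_f$ exactly as in the proof of Theorem \ref{p_main}: using the wavelet coefficients of $f$ and the budget $N_I$ fixed by (\ref{first_inf_budget}), set $s_f := \sum_{I\in\D} f_I\,\gapp{N_I}\psi_I$, which by the computation in Section 4.1 ($\sum_I N_I\le N$) is a linear combination of at most $N$ Gaussians. The essential difference from the $L_p$ argument is that we can no longer call on the (vector-valued) maximal inequality; instead we exploit the $\ell_q$ structure of the Besov seminorm directly, via a triangle inequality across dyadic levels,
\[
\|f - s_f\|_\infty \le \sum_{j\in\Z}\left\|\sum_{I\in\D_j} f_I\bigl(\psi_I-\gapp{N_I}\psi_I\bigr)\right\|_\infty ,
\]
where at each level the sum is first truncated and then recovered by the unconditional convergence of the wavelet expansion established above. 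Everything thus reduces to estimating, level by level, the error of approximating the single energy packet $\sum_{I\in\D_j} f_I\psi_I$.

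For a fixed level $j$ I would invoke Lemma \ref{energy} with $a_I=f_I$, bounding the $j$-th summand by $C_{k,d}\sup_{I\in\D_j}|f_I|\,N_I^{-k/d}$. The one point needing care is the contribution of the \emph{dropped} wavelets, i.e.\ those $I$ with $N_I=0$, for which $\gapp{N_I}\psi_I=0$ and the factor $N_I^{-k/d}$ is meaningless. For these I would argue separately using the rapid decay of $\psi_I$ itself, so that the summability-of-envelopes step of Lemma \ref{energy} (the bounded overlap of $(1+\dist(x,I)/\ell(I))^{-2k}$ at a fixed scale) still applies to $\psi_I$. The net effect is to replace $N_I^{-k/d}$ by $\min(1,N_I^{-k/d})$, giving for each $j$
\[
\left\|\sum_{I\in\D_j} f_I\bigl(\psi_I-\gapp{N_I}\psi_I\bigr)\right\|_\infty \le C_{k,d}\sup_{I\in\D_j}|f_I|\,\min\bigl(1,N_I^{-k/d}\bigr).
\]
Since $N_I=\lfloor \cd_I\rfloor\ge \cd_I/2$ whenever a term is kept (as then $\cd_I\ge N_0\ge 2$), and $\min(1,\cdot)$ handles the kept/dropped dichotomy uniformly, I may replace $N_I$ by $\cd_I$ at the cost of a constant.

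The crux is the explicit per-level supremum. Substituting the Besov cost $\cd_I=N|f|_{B_{\tau,q}^s}^{-q}A_j^{q-\tau}|f_I|^\tau$ and the parameter relations $\tau=d/s$, $q=d/(d+s)$ (so that $1-\tau k/d=1-k/s<0$ once $k>s$), a short computation shows that the decreasing profile $|f_I|^{1-k/s}$ on the kept range and the increasing profile $|f_I|$ on the dropped range both peak at the common threshold $\cd_I\approx N_0$, whence, up to the constant $\max(2^{k/d},N_0^{1/\tau})$,
\[
\sup_{I\in\D_j}|f_I|\,\min\bigl(1,\cd_I^{-k/d}\bigr)\le C\, N^{-s/d}\,|f|_{B_{\tau,q}^s}^{\,s/(d+s)}\,A_j^{\,q}.
\]
Summing over $j$ and recognizing $\sum_j A_j^{\,q}=|f|_{B_{\tau,q}^s}^{\,q}$ from the very definition of the Besov seminorm, together with the arithmetic identity $s/(d+s)+q=1$, collapses the total bound to $C_{s,d}\,N^{-s/d}|f|_{B_{\tau,q}^s}$; one fixes $k$ (depending only on $s$ and $d$) with $k>s$ and $2k>d$ so that Lemma \ref{energy} applies. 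I expect the main obstacle to be precisely this per-level estimate: one must verify that neither the kept nor the dropped coefficients let a level exceed $A_j^{\,q}$ after normalization, so that the final summation exactly reproduces the Besov seminorm. Everything downstream is the exact-exponent bookkeeping that the cost distribution (\ref{B_cost_dist}) was engineered to make vanish.
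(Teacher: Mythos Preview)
Your proposal is correct and follows the paper's approach almost exactly: define $s_f$ via the Besov budget, split by dyadic level, apply Lemma~\ref{energy} at each level, pass from $N_I$ to $\cd_I$, and sum $A_j^q$ to recover $|f|_{B^s_{\tau,q}}$. You are in fact more careful than the paper about the dropped terms ($N_I=0$), where the paper's invocation of Lemma~\ref{energy} is somewhat cavalier.

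The one place where you work harder than necessary is the per-level supremum. Rather than analyzing the kept and dropped profiles separately and locating the peak at the threshold $\cd_I\approx N_0$, the paper first passes uniformly to the exponent $s$ via $\min(1,N_I^{-k/d})\le C\,\cd_I^{-s/d}$ (kept terms: $N_I^{-k/d}\le N_I^{-s/d}\le C\cd_I^{-s/d}$; dropped terms: $\cd_I\le N_0$ gives $1\le N_0^{s/d}\cd_I^{-s/d}$), and then observes that
\[
\cd_I^{-s/d}|f_I| \;=\; N^{-s/d}\,|f|_{B^s_{\tau,q}}^{qs/d}\,A_j^{(\tau-q)s/d}\,|f_I|^{\,1-\tau s/d}.
\]
Since $\tau s/d=1$, the exponent on $|f_I|$ is \emph{exactly zero}: the quantity is constant across $I\in\D_j$, and the supremum is trivially $N^{-s/d}|f|_{B^s_{\tau,q}}^{qs/d}A_j^{q}$ (using $(\tau-q)s/d=q$). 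Your threshold argument reaches the same bound, but this cancellation is precisely what the cost (\ref{B_cost_dist}) was designed to produce, and exploiting it directly removes the case analysis.
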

\begin{proof}
Using the budget (\ref{first_inf_budget}),
the approximant is
$$s_f:= \sum_{I\in \D_k} f_{I}\,T_{N_{I}}\psi_{I},$$
where each term, 
$\left[T_{N_{I}}\psi_{I}\right]$ 
is composed of $N_{I}$ Gaussians as in Theorem \ref{loc_err3}.

We estimate $\|f - s_f\|_{\infty},$ recalling the unconditional convergence
of the wavelet expansion for functions coming from the Besov space for this 
choice of $\tau$ and $q$.
\begin{eqnarray*}
\|f - s_f\|_{\infty}
&\le& 
\sum_{j=-\infty}^{\infty} 
\left\|
  \sum_{I\in \D_j} 
    f_{I} (\psi_{I} - \gapp{N_{I}}\psi_{I})
\right\|_{\infty}\\
&\le&
C(k,d) 
\sum_{j=-\infty}^{\infty} 
  \left\|
    I\mapsto  N_{I}^{-k/d}|f_{I}|
  \right\|_{\ell_{\infty}(\D_j)}\\
&\le&
C(k,d) 
\sum_{j=-\infty}^{\infty} 
  \left\|
    I\mapsto  \cd_{I}^{-s/d}|f_{I}|
  \right\|_{\ell_{\infty}(\D_j)}.\\ 
\end{eqnarray*}
The first inequality is simply the triangle inequality, since the sums considered
are all finite, while the second is Lemma \ref{energy}. The final inequality holds
for $s<k$, because $\cd_{I}\le 1+ N_{I}\le C N_{I}$.

By invoking the definition of $\cd_{I}$, and by manipulating exponents
(specifically, using the facts that $\tau s/d = 1$ and that $s/d+1 = 1/q$) 
we arrive at
\begin{eqnarray*}
\|f - s_f\|_{\infty}
&\le&
C(k,d) N^{-s/d} |f|_{B_{\tau,q}^s}^{qs/d} 
\sum_{j=-\infty}^{\infty} A_j^{(\tau-q)s/d}
  \left\|
    I\mapsto  |f_{I}|^{1-\tau s/d}
  \right\|_{\ell_{\infty}(\D_j)}\\
 &=& C(k,d) N^{-s/d} |f|_{B_{\tau,q}^s}^{qs/d} 
\sum_{j=-\infty}^{\infty} A_j^{(\tau-q)s/d} \\
&=& 
C(k,d) N^{-s/d} |f|_{B_{\tau,q}^s}^{qs/d} 
\sum_{j=-\infty}^{\infty} A_j^{q} = 
C(k,d) N^{-s/d} |f|_{B_{\tau,q}^s}.
\end{eqnarray*}
\end{proof}

\bibliographystyle{siam}
\bibliography{HR_rev}

\begin{thebibliography}{10}

\bibitem{BR}
{\sc C.~de~Boor and A.~Ron}, {\em Fourier analysis of the approximation power
  of principal shift-invariant spaces}, Constr. Approx., 8 (1992),
  pp.~427--462.

\bibitem{DR}
{\sc R.~A. DeVore and A.~Ron}, {\em Approximation using scattered shifts of a
  multivariate function}.
\newblock To appear in Trans.\ Amer.\ Math.\ Soc.

\bibitem{FeSt}
{\sc C.~Fefferman and E.~M. Stein}, {\em Some maximal inequalities}, Amer. J.
  Math., 93 (1971), pp.~107--115.

\bibitem{FZ}
{\sc B.~Fornberg and J.~Zuev}, {\em The {R}unge phenomenon and spatially
  variable shape parameters in {RBF} interpolation}, Comput. Math. Appl., 54
  (2007), pp.~379--398.

\bibitem{Graf}
{\sc L.~Grafakos}, {\em Classical and modern {F}ourier analysis}, Pearson
  Education, Inc., Upper Saddle River, NJ, 2004.

\bibitem{KaCa}
{\sc E.~J. Kansa and R.~E. Carlson}, {\em Improved accuracy of multiquadric
  interpolation using variable shape parameters}, Comput. Math. Appl., 24
  (1992), pp.~99--120.
\newblock Advances in the theory and applications of radial basis functions.

\bibitem{LiMi}
{\sc X.~Li and C.~A. Micchelli}, {\em Approximation by radial bases and neural
  networks}, Numer. Algorithms, 25 (2000), pp.~241--262.
\newblock Mathematical journey through analysis, matrix theory and scientific
  computation (Kent, OH, 1999).

\bibitem{Meyer}
{\sc Y.~Meyer}, {\em Wavelets and operators}, vol.~37 of Cambridge Studies in
  Advanced Mathematics, Cambridge University Press, Cambridge, 1992.
\newblock Translated from the 1990 French original by D. H. Salinger.

\bibitem{PogVap}
{\sc B.~Schölkopf, B.~Scholkopf, K.~Sung, C.~Burges, F.~Girosi, P.~Niyogi,
  T.~Poggio, and V.~Vapnik}, {\em Comparing support vector machines with
  gaussian kernels to radial basis function classifiers}, IEEE Transactions on
  Signal Processing, 45 (1997), pp.~2758--2765.

\bibitem{StSc}
{\sc I.~Steinwart and C.~Scovel}, {\em Fast rates for support vector machines
  using {G}aussian kernels}, Ann. Statist., 35 (2007), pp.~575--607.

\bibitem{YiZh}
{\sc Y.~Ying and D.-X. Zhou}, {\em Learnability of {G}aussians with flexible
  variances}, J. Mach. Learn. Res., 8 (2007), pp.~249--276 (electronic).

\end{thebibliography}

\end{document}